\documentclass[12pt]{amsart}

\usepackage{amsmath,amssymb,amsthm,graphicx,color,amscd}
\usepackage[usenames,dvipsnames]{xcolor}
\usepackage{enumitem}

\usepackage{hyperref}

\numberwithin{equation}{section}
\setcounter{secnumdepth}{2}
\setcounter{tocdepth}{1}
\hypersetup{bookmarksdepth=3}

\theoremstyle{plain}
\newtheorem{theorem}{Theorem}[section]
\newtheorem{proposition}{Proposition}[section]
\newtheorem{lemma}{Lemma}[section]

\theoremstyle{definition}
\newtheorem{definition}{Definition}[section]

\theoremstyle{remark}
\newtheorem{remark}{Remark}[section]

\newcommand{\lloc}{L^1_{loc}}
\newcommand{\lpl}{L^p_{loc}}
\newcommand{\dimm}{\operatorname{dim}}
\newcommand{\dett}{\operatorname{det}}

\title{On the smoothness of $C^1$-contact maps in $C^\infty$-rigid Carnot groups}

\author{Jona Lelmi}
\address[Jona Lelmi]{Institut für angewandte Mathematik, Universität Bonn, Endenicher Allee 60, 53115 Bonn, Germany}
\email{jona.lelmi@uni-bonn.de}

\begin{document}

\maketitle

\begin{abstract}
We show that in any $C^\infty$-rigid Carnot group in the sense of Ottazzi - Warhurst, $C^1$-contact maps are automatically smooth.
\end{abstract}

\tableofcontents

\section{Introduction and statement of results}

In \cite{Ottazzi2011} Ottazzi and Warhurst introduced a notion of rigidity of Carnot groups. They say that a Carnot group $G$ is $C^k$-rigid provided for any connected open subset $U \subset G$ the space $\text{Con}^k(U)$ of $C^k$-contact maps defined on $U$ is finite dimensional. They also show that $C^\infty$-rigid groups are $C^2$-rigid. It is then natural to raise the question of whether this is true also in the $C^1$-case or even for rigidity defined using weaker notions of contact maps. This is also remarked in the recent work of Kleiner, M\"uller and Xie \cite{Kleiner2020}. The fact that $C^{\infty}$-rigid groups are $C^2$-rigid relies on the fact that the pushforward of a contact field by a $C^2$ contact map $f$ is still a contact field. Using a local frame of contact fields one can then pushforward by $f$ this to a new frame in the image. In $C^{\infty}$-rigid Carnot groups one can easily see, by smoothing, that contact fields are smooth. One can then combine the observations made above to construct local charts in which the map $f$ becomes the identity, thus a smooth map. For the case $k = 1$ there is a lack of regularity to use the same argument: indeed the key technical point in the case $k \ge 2$ is the fact that if $f:U \to V$ is a $C^2$-contact diffeomorphism, then $[f_{*}Z, X] = f_{*}[Z, f^{-1}_*X]$ whenever $Z$ is a contact field and $X$ is horizontal. This is used to show that the pushforward by a $C^2$-contact diffeomoprhism maps contact fields to contact fields. In this paper, we extend the result to the $C^1$-case. Namely, we prove the following theorem.
\begin{theorem}\label{c1rigTheo}
Let $G$ be a $C^{\infty}$-rigid Carnot group, then every $C^1$-contact map is smooth.
\end{theorem}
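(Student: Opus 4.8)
The plan is to reduce the smoothness of $f$ to a single pushforward statement and then build smooth coordinates out of contact-field flows. Throughout I treat $f$ as a $C^1$ contact diffeomorphism onto its image, which is harmless since contact maps in this setting are local diffeomorphisms and the conclusion is local. The central claim, playing the role of the bracket identity $[f_*Z,X]=f_*[Z,f^{-1}_*X]$ from the $C^2$ theory, is the following.
\begin{lemma*}
If $Z$ is a smooth contact field on $U$, then $f_*Z$ is a smooth contact field on $V=f(U)$.
\end{lemma*}

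To prove it I would avoid differentiating $f$ twice and argue instead at the level of flows. Let $\Phi_t$ denote the flow of $Z$ and set $\Psi_t:=f\circ\Phi_t\circ f^{-1}$. Since a composition (and inverse) of $C^1$ contact maps is again a $C^1$ contact map, each $\Psi_t$ is a $C^1$ contact diffeomorphism, the family $(t,q)\mapsto\Psi_t(q)$ is jointly $C^1$, and a direct computation gives $\frac{d}{dt}\Psi_t(q)=(f_*Z)_{\Psi_t(q)}$ with $\Psi_0=\mathrm{id}$; thus the continuous field $W:=f_*Z$ has $\Psi_t$ as its flow. The contact property $d\Psi_t(H)=H$ means that for every smooth horizontal field $X$ the field $(\Psi_{-t})_*X$ is again horizontal, i.e.\ $\langle\alpha,(\Psi_{-t})_*X\rangle\equiv0$ for every smooth $1$-form $\alpha$ annihilating $H$. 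Now $[W,X]$ is well defined as a distribution for continuous $W$ (only the term $X(W)$ is taken distributionally), and it equals the distributional $t$-derivative at $0$ of $(\Psi_{-t})_*X$. Pairing the latter with $\alpha$ gives $0$ for every $t$, so $\langle\alpha,[W,X]\rangle=0$ as distributions for all horizontal $X$ and all $\alpha\in\mathrm{Ann}(H)$. In other words $W=f_*Z$ solves the contact equations distributionally.

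It then remains to upgrade this distributional solution to a smooth one, and here I would invoke exactly the regularity underlying $C^\infty$-rigidity noted in the introduction: in a $C^\infty$-rigid group the contact equations form an overdetermined linear system with polynomial coefficients in exponential coordinates, and the same smoothing argument that makes contact fields smooth shows that every $\lloc$ — in particular every continuous — distributional solution agrees with a smooth one. Applying this to $W$ finishes the Lemma. To conclude Theorem~\ref{c1rigTheo} I would then produce coordinates adapted to $f$. The left translations of $G$ are contact, so each right-invariant field $Y_i=v_i^R$ (for a basis $v_1,\dots,v_n$ of the Lie algebra) is a smooth contact field, and the $Y_i$ span $T_pG$ at every $p$; their flows are left translations $\Phi^{Y_i}_t(p)=\exp(tv_i)\cdot p$, so $(t_1,\dots,t_n)\mapsto\Phi^{Y_1}_{t_1}\circ\cdots\circ\Phi^{Y_n}_{t_n}(p_0)$ is a smooth chart near $p_0$. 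The flow-conjugation identity $\Phi^{f_*Y_i}_t\circ f=f\circ\Phi^{Y_i}_t$ gives
\[
f\bigl(\Phi^{Y_1}_{t_1}\circ\cdots\circ\Phi^{Y_n}_{t_n}(p_0)\bigr)
=\Phi^{W_1}_{t_1}\circ\cdots\circ\Phi^{W_n}_{t_n}(q_0),\qquad W_i=f_*Y_i,\ \ q_0=f(p_0),
\]
and by the Lemma each $W_i$ is a smooth contact field, so the right-hand side is smooth in $(t_1,\dots,t_n)$; hence $f$ is smooth.

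The main obstacle is the middle step: transferring the geometric invariance $d\Psi_t(H)=H$ into a usable PDE for the merely continuous field $f_*Z$. This is precisely where the $C^1$ setting departs from the $C^2$ one, since one cannot differentiate $f$ a second time to compute $[f_*Z,X]$ pointwise; concretely, the delicate point is justifying that the distributional bracket $[W,X]$ really coincides with the distributional flow-derivative $\frac{d}{dt}\big|_0(\Psi_{-t})_*X$ when $W$ is only continuous and $\Psi_t$ only $C^1$ in space. Once that identification is in hand the contact condition is read off the flow for free, and classical regularity is recovered only afterwards from the rigidity of the contact system.
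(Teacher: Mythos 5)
Your overall skeleton matches the paper's: (i) show that $f_*$ sends contact fields to distributional solutions of the contact system, (ii) upgrade such solutions to smooth contact fields by smoothing plus the finite-dimensionality coming from $C^\infty$-rigidity, (iii) conclude via the flow charts and the conjugation identity $\phi^t_{f_*X_i}\circ f=f\circ\phi^t_{X_i}$. Your steps (ii) and (iii) are exactly the paper's Theorem~\ref{regularityWeakcof} and the proof of Theorem~\ref{c1rigTheo} (your distributional contact condition agrees with the paper's notion of weak contact field, by Remark~\ref{remarkWeakcof}), and they are fine. The gap is in step (i), at precisely the point you flag and then leave open: the identification of the distributional bracket $[W,X]$, $W=f_*Z$, with the distributional $t$-derivative of $(\Psi_{-t})_*X$ at $t=0$. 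This is not a routine verification that can be deferred. Unwinding $d\Psi_{-t}$ via $\Psi_{-t}=f\circ\Phi_{-t}\circ f^{-1}$ one gets, at the relevant points, $d\Psi_{-t}|_{\Psi_t p}=df|_{y}\,d\Phi_{-t}|_{\Phi_t y}\,(df|_{\Phi_t y})^{-1}$ with $y=f^{-1}(p)$, so the difference quotient $\tfrac1t\left(d\Psi_{-t}-I\right)$ splits into a harmless part driven by the smooth flow $\Phi_t$ plus terms of the form
\begin{equation*}
\frac1t\left[\,df|_{\Phi_t y}-df|_{y}\,\right]
\end{equation*}
(and analogous difference quotients of $\det d\Psi_{-t}$ from the change of variables). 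These converge only in the sense of distributions, to derivatives of the merely continuous matrix $df$, while in the integrals they appear multiplied by factors that are themselves only continuous: entries of $df$ and $(df)^{-1}$, Jacobians, compositions with $f^{-1}$. A distribution cannot be paired with a continuous test object, so the limit cannot be taken term by term; this is the ``cannot differentiate $f$ twice'' obstruction resurfacing inside your flow computation, not a technicality one can wave through. (Note also that $\langle\alpha,(\Psi_{-t})_*X\rangle\equiv 0$ for all $t$, so the flow gives you the value $0$ for free; \emph{all} of the content of step (i) is concentrated in the unproven identification.)

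The paper gets around this by never forming $[f_*Z,X]$ at all. The contact condition is encoded dually: $Z$ is a weak contact field iff $\int i_Z(d\eta)\wedge\beta-\int i_Z\eta\wedge d\beta=0$ for vertical $\eta$ and compactly supported $\beta$ of weight $-\nu+1$ (Definition~\ref{weakCofield}); this identity is first extended, by smoothing, from smooth test forms to continuous ones whose distributional differentials are continuous; then, to verify it for $f_*Z$ on $V$, one tests with smooth left-invariant $\tilde\eta$ and smooth $\tilde\beta$ and pulls back to $U$, using only that $f^*$ commutes with $d$ for $C^1$ maps and the $C^1$ change of variables for top-degree forms. In that argument every derivative falls on test forms, never on $W$ or on $df$, and the only vector field being (implicitly) differentiated is the smooth field $Z$ on the source side. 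To close your gap you would essentially have to dualize in this way, i.e.\ reproduce the paper's proof of Theorem~\ref{pushfoc1} rather than give an alternative to it.
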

The strategy for the proof is somehow similar to the one mentioned above, the heart of the matter is a new definition of weak contact field (see Definition \ref{weakCofield}): this is essentialy obtained by reading the contact field equation in the weak sense. We will next show that in any Carnot group $G$, the pushforward by a $C^1$-contact map sends contact fields to weak contact fields, this is the content of Theorem \ref{pushfoc1}. Our definition of weak contact field is obtained making some observations on the duality action of contact fields on differential forms. This gives a definition of weak contact fields which involves a left invariant integral equation, thus it allows for smoothing. In rigid groups this fact, together with a priori estimates coming from the finite dimensionality of the space of contact fields, allows us to conclude that weak contact fields are actually smooth contact fields, this is the content of Theorem \ref{regularityWeakcof}. Combining these facts one can use the same argument as in the case $k \ge 2$ to show that $C^1$-contact maps are smooth.
\\
The strategy of the proof is flexible and it applies also to maps which are contact in a weaker sense. Unfortunately, it seems that the strategy does not apply without adding any assumption on the \textit{full} gradient: so one cannot treat the case of weak contact maps or quasiconformal maps. Extensions and limits of the strategy will be addressed in the Master thesis of the author \cite{JLMaster}.
\\
After this work was completed the author learned that in a very recent work A. Austin \cite{Austin2020} has proved Theorem \ref{c1rigTheo} for the special case of $(2,3,5)$ distributions. His proof already contains the idea of defining a certain notion of generalized contact fields that are shown to be smooth in every $C^{\infty}$-rigid Carnot group. Nevertheless, we find that our definition captures better the properties of such vector fields and the coordinate free definition allows us to conclude smoothness of those fields in a transparent way. To conclude the proof of Theorem \ref{c1rigTheo} in the special case of the $(2,3,5)$ distributions the author of \cite{Austin2020} clearly uses the special algebraic structure of them, while here we prove the statement for general $C^{\infty}$-rigid Carnot groups using mainly the properties of the dual action of contact fields on differential forms.

\subsection*{Structure of the paper}
The paper is organized as follows: in Section \ref{Prelim} we recall some basic results and definitions about Carnot groups, this will be useful to set up the notation we use later on. In Section \ref{wConF} we define the notion of weak contact field and motivate it by showing that every contact field is a weak contact field. We then prove that in $C^{\infty}$-rigid Carnot groups, weak contact fields are smooth. This will require some properties of the smoothing operations applied to vector fields and differential forms: these operations are introduced in Section \ref{smoothing}, where we also collect some of their basic properties. In Section \ref{c1rig} we finally prove the main result of the paper: first we show that the pushforward by a $C^1$-contact map sends contact fields to weak contact fields and then we deduce smoothness of those maps in the setting of $C^{\infty}$-rigid groups.

\subsection*{Acknowledgements} I wish to thank Bruce Kleiner and Stefan M\"uller for interesting discussions and for helpful suggestions that improved the original exposition of the argument.
\section{Preliminaries}\label{Prelim}
Hereafter we introduce the basic objects of our invastigation, this will serve also to set the notation we will be using later on. The notation mostly follows the paper of Ottazzi and Warhurst \cite{Ottazzi2011}.

\subsection*{Carnot groups}
Hereafter $G$ will always denote a Carnot group. A Carnot group is a connected, simply connected Lie group whose Lie algebra $\mathfrak{g}$ is stratified in the following sense: there exist subspaces $\mathfrak{g}_{-1}, . . . , \mathfrak{g}_{-s}$ such that
\begin{enumerate}
\item $\mathfrak{g} = \mathfrak{g}_{-1} \oplus . . . \oplus \mathfrak{g}_{-s}$.
\item For any $i = 1, . . . , s$ we have $[\mathfrak{g}_{-1}, \mathfrak{g}_{-i}] = \mathfrak{g}_{-i-1}$, where we set $\mathfrak{g}_{-s-1} = \{0\}$.
\end{enumerate}
We will denote with $n := \dimm(\mathfrak{g})$ the topological dimension of the group, $d_l := \dimm(\mathfrak{g}_{-l})$ will denote the dimension of the layer $l$ and $\nu = \sum_{l=1}^s ld_l$ will denote the homogeneous dimension. We will usually denote by $\{ X_{-l,v},\ l= 1, . . . , s,\ v = 1, . . . , d_l\}$ a basis of $\mathfrak{g}$ adapted to the stratification. If confusion does not arise, we will not distinguish between an element of $\mathfrak{g}$ and the left-invariant vector field associated to it. We usually denote by $\{ \sigma_{-l,v},\ l = 1, . . . , s,\ v = 1, . . . , d_l\}$ the basis of one forms dual to $\{X_{-l,v}\}$. As for vector fields, we do not distinguish between elements of $\bigwedge^{\bullet} \mathfrak{g}$ and the left-invariant forms generated by them. We will denote by $\delta_{t}$ the usual dilations operators: given $t > 0$, $\delta_t$ is the unique Lie algebra automorphism such that $\delta_t(X_{-l}) = t^l X_{-l}$ for any $X_{-l} \in \mathfrak{g}_{-l}$ and any $l = 1, . . . , s$.
\\
Given any point $x \in G$ we will denote by $\ell_x$ the left multiplication by $x$ and with $r_x$ the right multiplication by $x$.
The horizontal bundle of the group is the bundle whose fibers are obtained by left translating $\mathfrak{g}_{-1}$. The bundle map is given by the projection on the base point. We will denote this bundle by $\mathcal{H}$. After introducing an inner product on $\mathfrak{g}_{-1}$ so that the basis $\{X_{-1,i},\ i= 1, . . . , d_1\}$ becomes orthonormal, we can define the Carnot-Carathéodory distance in the usual way. We denote this distance by $d_{cc}$.
\\
Given a Carnot group $G$ and an open subset $U \subset G$, we will denote by $\Gamma(U)$ the space of measurable sections of $TG$ defined on $U$ and with $\Omega^{k}(U)$ (resp. $\Omega^{\bullet}(U)$) the space of measurable sections of $\bigwedge^{k}G$ (resp. $\bigwedge^{\bullet}G$) defined on $U$. Given $k \ge 1$, $X \in \Gamma(U)$ and $\omega \in \Omega^k(U)$ we define $i_X\omega \in \Omega^{k-1}(U)$ by
\begin{equation}
(i_X\omega)_p (v_1, . . . , v_{k-1}) = \omega_p(X_p, v_1, . . . , v_{k-1})
\end{equation}
for any $v_1, . . . , v_{k-1} \in T_pG$ and a.e. $p \in U$.
\begin{definition}\label{verFor}
Let $U \subset G$ an open subset of a Carnot group. A measurable $1$-form $\eta \in \Omega^1(U)$ is called vertical if 
\begin{equation}
\eta_x(X_x) = 0
\end{equation}
for a.e. $x \in U$ and every $X_x \in \mathcal{H}_x$.
\end{definition}
The primary objects of investigation are contact maps.
\begin{definition}
Let $k \in \mathbf{N} \cup \{\infty\}$. Given $U \subset G$ open, a $C^k$-local diffeomorphism $f: U \to G$ is said to be a $C^k$ contact map if the differential preserves the horizontal bundle, i.e. $d_xf(\mathcal{H}_x) \subset \mathcal{H}_{f(x)}$ for any $x \in U$.
\end{definition}
Following Ottazzi and Warhurst we define rigid Carnot groups as follows.
\begin{definition}
A Carnot group $G$ is called $C^k$-rigid if given any connected open subset $U \subset G$ the space
\begin{equation}
\text{Con}^k(U) := \left\{f:U \to G:\ f\ C^k\text{-contact map} \right\}
\end{equation}
is finite dimensional.
\end{definition}
On the infinitesimal level, we have the following definition.
\begin{definition}\label{coF}
Let $Z$ be a locally defined $C^1$-vector field, let $\phi^Z_t$ its locally defined flow. Then $Z$ is called a contact field if $\phi^Z_t$ is a $C^2$-contact map for $t$ small enough.
\end{definition}
\begin{remark}\label{cofFinite}
Ottazzi and Warhurst proved in \cite{Ottazzi2011} that a Carnot group is $C^2$-rigid if and only if the space of germs of contact fields at a point $p$ is finite dimensional for every choice of $p \in G$.
\end{remark}
\begin{remark}\label{contactFieldsFrame}
Let us point out the following facts.
\begin{enumerate}
\item It is not hard to check that a locally defined $C^1$ vector field $Z$ is a contact field if and only if $[Z, X_{-1}] \in \mathcal{H}$ for any $X_{-1} \in \mathcal{H}$.
\item We can always find a global frame of contact fields. Indeed define $X^{R}_{-l,v}(x) = d_er_xX_{-l,v}$, then $X^{R}_{-l,v}$ is a global frame of right invariant vector fields. Since right invariant fields commute with left invariant fields we have
\begin{equation}
[X_{-l,v}^R, X_{-1,i}] = 0 \in \mathcal{H}.
\end{equation}
Thus $\{X_{-l,v}^R\}$ is a family of contact fields.
\end{enumerate}
\end{remark}

\subsection*{Weights.}
We next recall the definition of weights of covectors.
\begin{definition}
Let $\mathfrak{g}$ be the Lie algebra of a Carnot group $G$. A nonzero covector $\omega$ in $\bigwedge^k \mathfrak{g}$ is said to have weight $w$ if, for every $t > 0$, $(\delta_t)_* \omega = t^w \omega$. We write $wt(\omega) = w$.
\end{definition}
\begin{remark}
It is easy to see that $\omega$ in $\bigwedge^1\mathfrak{g}$ has weight $w$ if and only if
\begin{equation}
\omega = \sum_{\alpha = 1}^{d_w} c_{\alpha}\sigma_{w, \alpha}
\end{equation}
for some constants $c_{\alpha} \in \mathbf{R}$. Indeed, if $v \in \mathfrak{g}_{-l}$ one has
\begin{equation}
\begin{split}
\omega(\frac{1}{t^l}v) &= \omega(\delta_{\frac{1}{t}}(v)) 
\\ &= (\delta_t)_{*}\omega(v) 
\\ &= t^{w}\omega(v).
\end{split}
\end{equation} 
Letting $v$ vary on $\mathfrak{g}_{-l}$ this forces $\omega(v) = 0$ if $-l \neq w$. A similar reasoning shows that $\omega \in \bigwedge^k \mathfrak{g}$ is of weight $w$ if and only if
\begin{equation}
\omega = \sum_{I} c_I \theta_I
\end{equation}
where $c_I \in \mathbf{R}$ and $\theta_I$ is a wedge product of covectors such that their weights add up to $w$. In particular, if $\omega \in \bigwedge^k \mathfrak{g}$ is non zero we have $-\nu \le wt(\omega) \le -k$.
\end{remark}
For measurable forms, we give the following definition.
\begin{definition}
Let $U \subset G$ an open subset of a Carnot group. Let $\omega \in \Omega^{\bullet}(U)$ a measurable section of $\bigwedge\nolimits^{\bullet} G$, if $\omega$ is not a.e. vanishing we define
\begin{equation}
wt(\omega) := \operatorname{esssup}_{x \in U} wt(\omega(x)).
\end{equation}
\end{definition}
\begin{remark}
It is clear that, with $\omega$ as above, $\omega(x)$ may be identified with an element of $\bigwedge^{\bullet} \mathfrak{g}$, so the definition is well posed.
\end{remark}

\subsection*{Structure constants and the contact field equation}. We next set up the notation for the structure constants of the group. Let $\mathfrak{g} = \mathfrak{g}_{-1} \oplus . . .  \oplus \mathfrak{g}_{-s}$ be the Lie algebra of a Carnot group $G$. Let, as above, $\{X_{-l,v}\}$ be a basis adapted to the stratification. Then for any $l = 1, . . . , s-1$, $v = 1, . . . d_l$ and $j = 1, . . . , d_1$ there are constants $\alpha_{v,j}^{l,1,k}$ (hereafter called \textit{structure constants}) such that
\begin{equation}
[X_{-l,v}, X_{-1,j}] = \sum_{k=1}^{d_{l+1}} \alpha_{v,j}^{l,1,k}X_{-l-1,k}.
\end{equation}
Let $Z$ be a locally defined $C^1$-vector field and define its coefficients $z_{-l,v}$ by
\begin{equation}
Z = \sum_{l=1}^s \sum_{v=1}^{d_l} z_{-l,v} X_{-l,v}.
\end{equation}
Then $z_{-l,v}$ are $C^1$ and it is not hard to see that the contact condition of Remark \ref{contactFieldsFrame} for $Z$ is equivalent to
\begin{equation}\label{coCof}
X_{-1,j}(z_{-l,k}) = \sum_{r=1}^{d_{l-1}} z_{-l+1, r} \alpha_{r, j}^{l-1,1,k}\ \forall 1 \le j \le d_1,\ 2 \le l \le s,\ 1 \le k \le d_l.
\end{equation}
We will refer to (\ref{coCof}) as the contact field equation written in coordinates.

\section{Smoothing}\label{smoothing}

In this section we collect some results about smoothing on Carnot groups. First we recall the smoothing of functions defined on Carnot groups by means of convolution with a mollifying kernel. This amounts to perform an average over translations of the function. Motivated by this, we can define smoothing operations on forms and vector fields so that these behave well in connection to the classical operations of Cartan calculus. 

\subsection{Smoothing functions}

We recall here the usual smoothing of functions on Carnot groups. The material here is standard and can be found in \cite{Folland1982, Grafakos2014}. Let $G$ be a Carnot group, let $\rho \in C^{\infty}_c(G)$ such that

\begin{equation}
\int_G \rho dx = 1,\ \ 0 \le \rho \le 1,\ \ \rho(y) = \rho(y^{-1}),\ \ supp(\rho) \subset B_{cc}(e, 1).
\end{equation}
The existence of such a function is easily verified. Here $B_{cc}(e,1)$ is the unit ball with respect to the Carnot-Carathéodory distance centered at the identity. Define, for $\epsilon > 0$, the function
\begin{equation}
\rho_{\epsilon}(x) = \epsilon^{-\nu} \rho\left(\delta_{\frac{1}{\epsilon}}(x)\right).
\end{equation}
Let $dx$ denote the biinvariant Haar measure of $G$. For a function $f \in L^{1}_{\text{loc}}(G)$ we set
\begin{equation}
\rho_\epsilon * f(x) := \int_{G} \rho_\epsilon (x y^{-1})f(y) dy = \int_G\rho_{\epsilon}(y^{-1})f(yx)dx = \int_{G}\rho_{\epsilon}(y)f(y^{-1}x) dy
\end{equation}
observe that for the second and third equality we used the bi-invariance of measure and the invariance under inversion. If $U \subset G$ is open, and $f \in L^{1}_{\text{loc}}(U)$, then we can define $\rho_{\epsilon} * f(x) = \rho_{\epsilon} * \tilde{f}(x)$, where $\tilde{f}$ is the extension by zero of $f$. Then the following properties hold.
\begin{proposition}\label{prop conv}
For $f \in L^1_{\text{loc}}(U)$ we have
\begin{enumerate}
\item $\rho_{\epsilon} * f \in C^{\infty}(G)$.
\item If $1 \le p < \infty$ and $f \in L^p(U)$ then $\rho_{\epsilon} * f \to f$ in $L^p(U)$.
\item $\int_{G} (\rho_{\epsilon} * f) g dx = \int_{G} (\rho_{\epsilon} * g) f dx$ for $g \in L^{\infty}(G)$ with compact support, or for $g \in L^{\infty}(G)$ if in addition $f \in L^1(U)$.
\item If $f \in C^{0}(U)$ then $\rho_{\epsilon} * f \to f$ locally uniformly in $U$.
\end{enumerate}
\end{proposition}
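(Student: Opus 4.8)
The plan is to prove the four statements separately, each reducing to a standard mollification fact once two structural features of the Carnot group have been recorded to replace the abelian picture on $\mathbf{R}^n$. First, since $G$ is nilpotent it is unimodular, so the Haar measure $dx$ is bi-invariant and also invariant under inversion (in exponential coordinates it is simply Lebesgue measure); this is exactly what justifies passing between the three expressions given for $\rho_{\epsilon} * f$. Second, because dilations scale the Carnot-Carathéodory distance, $d_{cc}(\delta_{\epsilon}x, e) = \epsilon\, d_{cc}(x,e)$, the supports contract to the identity: $\operatorname{supp}(\rho_{\epsilon}) = \delta_{\epsilon}(\operatorname{supp}\rho) \subset B_{cc}(e,\epsilon)$. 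Throughout I would work with the extension by zero $\tilde{f} \in L^1_{\text{loc}}(G)$, so that $\rho_{\epsilon} * f$ is defined and analyzed on all of $G$ and the conclusions are then restricted to $U$.

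For (1) I would use the expression $\rho_{\epsilon} * f(x) = \int_G \rho_{\epsilon}(xy^{-1})f(y)\,dy$ and differentiate under the integral sign. Fixing a compact neighborhood $V$ of an arbitrary point, the integrand is supported in $y \in (\operatorname{supp}\rho_{\epsilon})^{-1}V =: K_V$, a fixed compact set on which $f$ is integrable; since $\rho_{\epsilon}$ is smooth with compact support, every (left-invariant, or coordinate) derivative of $x \mapsto \rho_{\epsilon}(xy^{-1})$ is continuous and bounded uniformly for $(x,y) \in V \times K_V$, which gives an $L^1$ domination by $C|f(y)|\mathbf{1}_{K_V}(y)$. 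The standard differentiation-under-the-integral theorem then applies, and iterating over all orders yields $\rho_{\epsilon} * f \in C^{\infty}(G)$. For (3) I would apply Fubini's theorem, justified by the integrability hypotheses on $g$ (and, in the alternative, $f$), and use the inversion symmetry $\rho_{\epsilon}(y) = \rho_{\epsilon}(y^{-1})$: writing both sides as double integrals, the identity $\rho_{\epsilon}(xy^{-1}) = \rho_{\epsilon}\bigl((xy^{-1})^{-1}\bigr) = \rho_{\epsilon}(yx^{-1})$ exhibits them as the same integral.

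For (2) and (4) the common mechanism is that $\int_G \rho_{\epsilon} = 1$ lets me write, using the third expression for the convolution, $\rho_{\epsilon} * f(x) - f(x) = \int_G \rho_{\epsilon}(u)\bigl(f(u^{-1}x) - f(x)\bigr)\,du$, where $u$ ranges over $B_{cc}(e,\epsilon)$. For (2), Minkowski's integral inequality bounds $\|\rho_{\epsilon} * f - f\|_{L^p(G)}$ by $\int_G \rho_{\epsilon}(u)\,\|f(u^{-1}\cdot) - f\|_{L^p(G)}\,du$, and since $\|\cdot\|_{L^p(U)} \le \|\cdot\|_{L^p(G)}$ the claim follows once the left-translation modulus $\|f(u^{-1}\cdot) - f\|_{L^p}$ tends to $0$ as $u \to e$. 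For (4), on a compact $K \subset U$ the uniform continuity of $f$ on a slightly larger compact set, combined with the facts that $(u,x) \mapsto u^{-1}x$ is continuous and that $d_{cc}$ induces the manifold topology, gives $\sup_{x \in K}|f(u^{-1}x) - f(x)| \to 0$ uniformly as $\operatorname{supp}\rho_{\epsilon}$ shrinks, whence local uniform convergence.

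The hard part, or rather the only point that is not a one-line invocation, is the continuity of left translation in $L^p$ underlying (2): that $\|f(u^{-1}\cdot) - f\|_{L^p(G)} \to 0$ as $u \to e$. I would establish this in the usual way, first for $f \in C_c(G)$ via uniform continuity and the compactness argument described for (4), and then for general $f \in L^p(G)$ with $1 \le p < \infty$ by density of $C_c(G)$ in $L^p(G)$ together with the translation-invariance of the $L^p$ norm. The non-commutativity of $G$ enters only in that one must track left versus right translations and remember that $d_{cc}$ is merely left-invariant; this is precisely why the uniform smallness of $d_{cc}(u^{-1}x, x)$ for $x$ in a compact set is extracted from a compactness argument rather than directly from invariance.
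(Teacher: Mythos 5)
Your proposal is correct and is essentially the paper's own proof: the paper simply states that ``the proofs of the statements are completely analogous to the ones on $\mathbf{R}^n$'' and cites Folland, and what you have written out is exactly that standard mollification argument, with proper attention to the only genuinely non-Euclidean points (unimodularity/bi-invariance of Haar measure, the dilation scaling of supports, and the need to get smallness of $d_{cc}(u^{-1}x,x)$ on compacta from a compactness argument rather than from invariance). No gaps beyond those already implicit in the paper's statement itself (e.g.\ the behaviour of the zero-extension near $\partial U$), so nothing further is needed.
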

The proofs of the statements are completely analogous to the ones on $\mathbf{R}^n$, see \cite{Folland1982}.
\subsection{Smoothing forms and vector fields}
In this subsection $U \subset G$ will be open.
\begin{definition}\label{sforms}
Let $\theta \in \Omega^{\bullet}(U)$ be a form with $\lloc(U)$ coefficients. For $\epsilon > 0$ and $x \in U$ define
\begin{equation}
\theta^{\epsilon}_{x} := \int_G (\ell_y^{*}\theta)_x\rho_{\epsilon}(y^{-1})dy
\end{equation}
\end{definition}
\begin{definition}\label{svfields}
Let $X \in \Gamma(U)$ be a vector field with $\lloc(U)$ coefficients. For $\epsilon > 0$ and $x \in U$ define
\begin{equation}
X^{\epsilon}_x := \int_{G} ((\ell_y)_*X)_x\rho_{\epsilon}(y) dy
\end{equation}
\end{definition}
\begin{remark}
It should be noticed that in the setting of Definition \ref{sforms} we have $(\ell_y^{*}\theta)_x \in \bigwedge^{\bullet}(T_xG^{*})$, thus the integral is well defined as an element in this vector space. Similar reasoning applies to Definition \ref{svfields}.
\end{remark}
\begin{proposition}[Properties of smoothing]\label{prop sforms} The following assertions hold.
\begin{enumerate}
\item If $\theta$ (respectively $X$) is as in Definition \ref{sforms} (resp. Definition \ref{svfields}) then $\theta^{\epsilon}$ (resp. $X^{\epsilon}$) is smooth.
\item If $\theta$ is a $k$-form with $\lloc(U)$ coefficients and $\beta$ is a compactly supported $n-k$ form with $\text{L}^{\infty}(U)$ coefficients
\begin{equation}
\int_U \theta^{\epsilon} \wedge \beta = \int_U \theta \wedge \beta^{\epsilon}.
\end{equation}
\item If $\alpha$ is a left invariant $(k+1)$-form, $\beta \in \Omega^{n-k}(U)$ has compact support and $X$ is as in Definition \ref{svfields}
\begin{equation}
\int_{U} i_{X^{\epsilon}}(\alpha) \wedge \beta = \int_{U} i_X(\alpha) \wedge \beta^{\epsilon}
\end{equation}
for every $\epsilon > 0$ small enough.
\item If $\theta \in \Omega^{k}(U)$ with coefficients in $\lloc(U)$ has a distributional exterior differential $d\theta \in \Omega^{k+1}(U)$ with $\lloc(U)$ coefficients, then $d\theta^{\epsilon} = (d\theta)^{\epsilon}$.
\item If $\theta \in \Omega^{\bullet}(U)$ (resp. $X \in \Gamma(U)$) has coefficients in $\lpl(U)$, then $\theta^{\epsilon} \to \theta$ (resp. $X^{\epsilon} \to X$) in $\lpl(U)$.
\item If $\theta \in \Omega^{\bullet}(U)$ (resp. $X \in \Gamma(U)$) is continuous the convergence in part (5) holds uniformly on compact subsets of $U$.
\end{enumerate}
\end{proposition}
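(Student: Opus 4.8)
The plan is to reduce every assertion to the corresponding property of the scalar convolution in Proposition \ref{prop conv} by means of a single computation: the smoothing of a form or a vector field amounts to convolving its coefficients in the left-invariant frame. Concretely, writing $\theta = \sum_I \theta_I \sigma_I$ with $\sigma_I$ left-invariant basis forms and $\theta_I \in \lloc(U)$, the left-invariance $\ell_y^{*}\sigma_I = \sigma_I$ gives $(\ell_y^{*}\theta)_x = \sum_I \theta_I(yx)(\sigma_I)_x$; inserting this into Definition \ref{sforms} and recognizing the middle expression for $\rho_\epsilon * \theta_I$ in the convolution identity yields
\begin{equation}
\theta^{\epsilon} = \sum_I (\rho_\epsilon * \theta_I)\,\sigma_I.
\end{equation}
Dually, for $X = \sum_{l,v} x_{-l,v}X_{-l,v}$ the identity $((\ell_y)_{*}X)_x = \sum_{l,v} x_{-l,v}(y^{-1}x)(X_{-l,v})_x$ together with the third expression for the convolution gives
\begin{equation}
X^{\epsilon} = \sum_{l,v}(\rho_\epsilon * x_{-l,v})\,X_{-l,v}.
\end{equation}
This is the only place where the precise form of the definitions and the bi-invariance and inversion symmetry of $\rho$ enter; all six statements then follow by working coefficientwise in the fixed smooth frame.

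Granting these two formulas, parts (1), (5) and (6) are immediate. Since each $\rho_\epsilon * \theta_I$ (resp. $\rho_\epsilon * x_{-l,v}$) is smooth by Proposition \ref{prop conv}(1) and the frame is smooth, $\theta^\epsilon$ and $X^\epsilon$ are smooth, proving (1). For (5) and (6) I would note that on a relatively compact subset the $\lpl$ (resp. sup) norm of a form or field is comparable to the sum of the corresponding norms of its coefficients, the frame being bounded with bounded inverse on compacta; then Proposition \ref{prop conv}(2) and (4) applied to each coefficient give convergence in $\lpl(U)$ and locally uniform convergence, respectively.

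For (2) and (3) the mechanism is the self-adjointness of convolution. Writing $\beta = \sum_J \beta_J\sigma_J$, every product $\sigma_I \wedge \sigma_J$ is a constant multiple of the left-invariant volume form, whose integral is the Haar integral of the scalar coefficient; hence
\begin{equation}
\int_U \theta^\epsilon \wedge \beta = \sum_{I,J} c_{IJ}\int_U (\rho_\epsilon * \theta_I)\,\beta_J\,dx,
\end{equation}
and Proposition \ref{prop conv}(3), valid because $\beta_J$ is compactly supported and bounded, moves the convolution onto $\beta_J$, giving $\int_U \theta \wedge \beta^\epsilon$. For (3) I would first use that $\alpha$ is left-invariant to write $i_{X^\epsilon}\alpha = \sum_{l,v}(\rho_\epsilon * x_{-l,v})\,i_{X_{-l,v}}\alpha$ with each $i_{X_{-l,v}}\alpha$ again left-invariant, which reduces the wedge $i_{X^\epsilon}\alpha \wedge \beta$ to exactly the pattern just treated; the restriction to small $\epsilon$ is only needed to keep $\operatorname{supp}(\beta^\epsilon)$ inside $U$, since the coefficients of $X$ are defined only on $U$.

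The one part requiring genuine care is (4), and I expect it to be the main obstacle. Decomposing $d\theta = \sum_I d\theta_I \wedge \sigma_I + \sum_I \theta_I\, d\sigma_I$, note that $d\sigma_I$ is again left-invariant and that $d\theta_I = \sum_{l,v} (X_{-l,v}\theta_I)\,\sigma_{-l,v}$, so smoothing $d\theta$ convolves the coefficients $X_{-l,v}\theta_I$ and $\theta_I$. On the other hand $d\theta^\epsilon$ involves $d(\rho_\epsilon * \theta_I) = \sum_{l,v} X_{-l,v}(\rho_\epsilon * \theta_I)\,\sigma_{-l,v}$. Thus (4) comes down to the commutation
\begin{equation}
X_{-l,v}(\rho_\epsilon * \theta_I) = \rho_\epsilon * (X_{-l,v}\theta_I),
\end{equation}
which holds because $X_{-l,v}$ is left-invariant and convolution is by left translations: differentiating under the integral sign in $\rho_\epsilon * \theta_I(x) = \int_G \rho_\epsilon(y)\theta_I(y^{-1}x)\,dy$ and using $X_{-l,v}(\theta_I \circ \ell_{y^{-1}}) = (X_{-l,v}\theta_I)\circ \ell_{y^{-1}}$ gives the identity. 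The delicate point is that $X_{-l,v}\theta_I$ is only assumed to exist distributionally — this is precisely the hypothesis that $d\theta$ has $\lloc$ coefficients — so I would establish the commutation first in the sense of distributions, testing against $\varphi \in C^\infty_c$ and transferring the derivative and the convolution onto the smooth test function via Proposition \ref{prop conv}(3), and only then read it as an identity of $\lloc$ functions. Assembling the two decompositions and matching terms yields $d\theta^\epsilon = (d\theta)^\epsilon$.
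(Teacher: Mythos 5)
Your proposal is correct, and for parts (1), (2), (5), (6) it is essentially the paper's own argument: the coordinate identities $\theta^{\epsilon} = \sum_I (\rho_\epsilon * \theta_I)\sigma_I$ and $X^{\epsilon} = \sum_{l,v}(\rho_\epsilon * x_{-l,v})X_{-l,v}$ are exactly what the paper derives, and the paper's appeal to Fubini plus inversion-invariance for (2) is the same computation as your appeal to Proposition \ref{prop conv}(3). The two places where you diverge are (3) and (4). For (3), the paper does a direct global computation: it evaluates $i_{X^{\epsilon}}(\alpha)\wedge\beta$ on a left-invariant frame, applies Fubini, and performs the change of variables $x \to yx$ using left invariance and $\rho_\epsilon(y)=\rho_\epsilon(y^{-1})$. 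Your route---noting that $i_{X^\epsilon}\alpha = \sum_{l,v}(\rho_\epsilon * x_{-l,v})\,i_{X_{-l,v}}\alpha$ with each $i_{X_{-l,v}}\alpha$ again left invariant, i.e.\ that $i_{X^\epsilon}\alpha = (i_X\alpha)^\epsilon$---reduces (3) to (2) and is cleaner; the same invariance facts enter, but only once, inside the coefficient formula. For (4) you take a genuinely different route: the paper argues by duality, testing $\theta^\epsilon$ against $d\alpha$ for smooth compactly supported $\alpha$ and using part (2) twice together with the smooth case $(d\alpha)^\epsilon = d\alpha^\epsilon$, so that $(d\theta)^\epsilon$ is identified as the distributional (hence, by smoothness of $\theta^\epsilon$, classical) differential of $\theta^\epsilon$; you instead prove a coefficientwise commutation of left-invariant derivatives with convolution.

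One caveat on your part (4): the hypothesis that $d\theta$ has $\lloc(U)$ coefficients is \emph{not} the statement that each $X_{-l,v}\theta_I$ exists as an $\lloc$ function---it only says that the particular combinations of the distributions $X_{-l,v}\theta_I$ and of the $\theta_I$ (coming from the terms $\theta_I\,d\sigma_I$) that form the frame coefficients of $d\theta$ are represented by $\lloc$ functions. Consequently you cannot "match terms" as identities of $\lloc$ functions term by term. This does not break your argument, because your distributional framing saves it: prove $X_{-l,v}(\rho_\epsilon * \theta_I) = \rho_\epsilon * (X_{-l,v}\theta_I)$ purely as distributions (by testing and using Proposition \ref{prop conv}(3), as you indicate, which also quietly uses that left-invariant fields are skew-adjoint for the bi-invariant Haar measure), then assemble the \emph{full sums} using linearity of convolution over distributions; both assembled sides are smooth forms, and the identity $d\theta^\epsilon = (d\theta)^\epsilon$ follows. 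The paper's duality argument buys the same conclusion without ever leaving the category of $\lloc$ forms, which is why it is shorter; your argument, once the assembly is phrased by linearity rather than termwise matching, is a valid and somewhat more explicit alternative.
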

\begin{proof}
Parts (1), (5) and (6) follow easily by writing the smoothing operation in coordinates. First observe that since the smoothing operation is linear in $\theta$ we may reduce to the case of forms with fixed degree $k$, thus we assume $\theta \in \Omega^{k}(U)$. Write
\begin{equation}
\theta = \sum_{I} \theta_{I} \sigma_{I}
\end{equation}
where $\sigma_{I}$ are left invariant forms of degree $k$ and $\theta_{I} \in \lloc(U)$. Then
\begin{equation}
\begin{split}
\theta_{x}^{\epsilon} &= \sum_{I} \int_{G} \theta_{I}(\ell_{y}(x))\sigma_{I,x}\rho_{\epsilon}(y^{-1})dy
\\ & = \sum_{I} \rho_{\epsilon} \ast \theta_I (x)\sigma_{I,x}
\end{split}
\end{equation}
thus the claims follow by an application of Proposition \ref{prop conv}. The argument for the smoothing of vector fields is analogous.
\\
For (3) we let $X_1, . . . , X_n$ be a basis $\mathfrak{g}$. If $\sigma_1, . . . , \sigma_n$ is the dual basis of $\{X_{i}\}$, by the uniqueness of the Haar measure we can assume (up to a constant)
$$
\bigwedge_{i=1}^n \sigma_{i} = dx.
$$
Thus
\begin{equation}\label{calcPart3}
\begin{split}
&\int_{U} i_{X^{\epsilon}}(\alpha) \wedge \beta = \int_{U} \left(i_{X^{\epsilon}}(\alpha) \wedge \beta\right) (X_1, . . . , X_n)dx 
\\& = \sum_{\gamma \in \mathfrak{S}n} \text{sgn}(\gamma)\int_{U}i_{X^{\epsilon}}(\alpha)(X_{\gamma(1),x}, . . . , X_{\gamma(k), x})\beta(X_{\gamma(k+1),x}, . . . , X_{\gamma(n),x})dx
\\ &= \sum_{\gamma \in \mathfrak{S}n} \text{sgn}(\gamma)\int_{U}\alpha_x\left(\int_G((\ell_y)_{*}X)_x\rho_{\epsilon}(y)dy, X_{\gamma(1),x}, . , X_{\gamma(k), x}\right)\beta_x(X_{\gamma(k+1),x}, . , X_{\gamma(n),x})dx
\\ &= \sum_{\gamma \in \mathfrak{S}n} \text{sgn}(\gamma)\int_G\int_{U}\alpha_x((\ell_y)_{*}X)_x, X_{\gamma(1),x}, ... , X_{\gamma(k), x})\beta_x(X_{\gamma(k+1),x}, ... , X_{\gamma(n),x})dx\rho_{\epsilon}(y)dy.
\end{split}
\end{equation}
Now we can perform the change of variables $x \to yx$ in the inner integral, observe that by left invariance $X_{i,yx} = (d\ell_y)_x X_{i,x}$ and $(\ell_{y}^*\alpha)_x = \alpha_x$. Recall also that $\rho_{\epsilon}(y) = \rho_{\epsilon}(y^{-1})$. Putting things together we get that the last line of (\ref{calcPart3}) is equal to
\begin{equation}
\begin{split}
& \sum_{\gamma \in \mathfrak{S}_n}\text{sgn}(\gamma) \int_{G} \int_{U} i_{X}(\alpha)_x( X_{\gamma(1),x}, ... , X_{\gamma(k), x})(\ell_{y}^*\beta)_x(X_{\gamma(k+1),x}, ... , X_{\gamma(n),x})\rho_{\epsilon}(y)dxdy
\\ &= \int_{U} i_{X}(\alpha) \wedge \beta^{\epsilon}.
\end{split}
\end{equation}
The argument for part (2) is just an application of Fubini's theorem and the invariance of the Haar measure under inversion.\\
Finally part (4) follows by the fact that $d$ commutes with $\ell_{y}^*$ in the case $\theta$ is smooth. For the general case pick any smooth compactly supported $(n-k-1)$-form $\alpha$, then using part (2) and the result for smooth forms we get
\begin{equation}
\begin{split}
\int_U \theta^{\epsilon} \wedge d\alpha &= \int_U \theta \wedge (d\alpha)^{\epsilon}
\\ & = \int_U \theta \wedge d\alpha^{\epsilon}
\\ & = (-1)^k\int_U d\theta \wedge \alpha^{\epsilon}
\\ & = (-1)^k\int_U (d\theta)^{\epsilon} \wedge \alpha.
\end{split}
\end{equation}
\end{proof}
\section{Weak contact fields}\label{wConF}
In this section we define the notion of weak contact fields and prove that, on every rigid Carnot group, they are smooth.

\subsection{Weak contact fields}
Let $Z$ be a $C^1$-contact field on an open set $U$. Let $\phi_t$ be the local flow of $Z$. The maps $\phi_t$ are contact maps. Thus if $\eta$ is a smooth vertical $1$-form in the sense of Definition \ref{verFor}, the Lie derivative 
\begin{equation}
\mathcal{L}_{Z}\eta(x) = \lim_{t \to 0} \frac{(\phi_t^*\eta)_x - \eta_x}{t}
\end{equation} 
is still a vertical $1$-form. This amounts to saying that $\mathcal{L}_Z\eta(X) = 0$ for any horizontal vector field $X$. Using Cartan's magic formula one can rewrite this as
\begin{equation}
\left( i_Z d\eta + d i_Z\eta \right)(X) = 0.
\end{equation}
Let now $\sigma$ be the volume form on $G$. Then for degree reasons, using also the Leibniz rule for the interior multiplication $i_X$, we have 
\begin{equation}\label{cartancodeg1}
\begin{split}
\left( i_Z d\eta + d i_Z\eta \right)\wedge i_X\sigma = &-i_X\left( (i_Z d\eta + d i_Z\eta) \wedge \sigma \right)
\\ &+ i_X\left( i_Z d\eta + d i_Z\eta \right) \wedge \sigma = 0. 
\end{split}
\end{equation}
Now let $\beta$ be a smooth codegree $1$ form of weight $-\nu + 1$ with compact support, in coordinates $\beta$ may be written as
\begin{equation}
\beta = \sum_{i=1}^{d_1} \beta_i \hat{\sigma}_{-1,i},\ \beta_i \in C_c^{\infty}(U)
\end{equation}
where we set
\begin{equation}
\hat{\sigma}_{-1,i} := \bigwedge_{(l,v) \neq (1,i)} \sigma_{-l,v}.
\end{equation}
Then $i_X\sigma = \beta$ if
\begin{equation}
X = \sum_{i=1}^{d_1} (-1)^{i-1}\beta_iX_{-1,i}.
\end{equation}
Inserting into this identity into (\ref{cartancodeg1}) and integrating on $U$ we get
\begin{equation}
0 = \int_U (i_Z d\eta + d i_Z\eta) \wedge \beta = \int_U i_Z (d\eta) \wedge \beta - \int_U i_Z\eta \wedge d\beta.
\end{equation}
It is then natural to give the following definition.
\begin{definition}\label{weakCofield}
Let $U \subset G$ be open. A vector field $Z \in \Gamma(U)$ with coefficients in $\lloc(U)$ is called a weak contact field provided that
\begin{equation}\label{eqweakcof}
\int_{U} i_{Z}(d\eta) \wedge \beta - \int_{U} i_Z(\eta) \wedge d\beta = 0
\end{equation}
for any smooth vertical $1$-form $\eta \in \Omega^1(U)$ and any smooth compactly supported form $\beta \in \Omega^{n-1}(U)$ of weight $-\nu + 1$.
\end{definition}
\begin{remark}\label{remarkWeakcof}
A few comments are in order.
\begin{enumerate}
\item Observe now that in order to check that a locally integrable vector field $Z$ is a weak contact field it is sufficient to show that (\ref{eqweakcof}) holds for every form $\beta$ as above and every $\eta$ which is vertical and left invariant. To see this, observe that if $\tilde{\eta}$ is a smooth vertical $1$-form on $U$, then we can write $\tilde{\eta}$ as
\begin{equation}
\tilde{\eta} = \sum_{I} f_I \eta_I
\end{equation}
where $f_I \in C^{\infty}(U)$ and $\eta_I$ are left-invariant, vertical $1$-forms. By linearity we can thus verify the claim for $\tilde{\eta} = g\eta$, where $g \in C^{\infty}(U)$ and $\eta$ is a left-invariant, vertical $1$-form. Observe that $d\tilde{\eta} = dg \wedge \eta + gd\eta$, thus $i_Zd\tilde{\eta} = i_Zdg \wedge \eta - dg \wedge i_Z\eta + gi_Zd\eta$. Using the result for the left-invariant form $\eta$ and for $\tilde{\beta} = g\beta$, and the fact that $\eta \wedge \beta = 0$ we get
\begin{equation}
\begin{split}
& \int_U i_Zd\tilde{\eta} \wedge \beta =
\\ & = \int_U i_Z(dg) \wedge \eta \wedge \beta - \int_U dg \wedge i_Z\eta \wedge \beta + \int_U i_Z\eta \wedge d(g\beta)
\\ & = - \int_U dg \wedge i_Z\eta \wedge \beta + \int_U i_Z\tilde{\eta} \wedge d\beta + \int_U dg \wedge i_Z\eta \wedge \beta
\\ & = \int_U i_Z\tilde{\eta} \wedge d\beta.
\end{split}
\end{equation}
\item One can check that Definition \ref{weakCofield} is equivalent to requiring that the contact field equation (\ref{coCof}) holds in the weak sense for $Z$, i.e. writing $Z = \sum_{l=1}^s \sum_{v =1}^{d_l}z_{-l,v}X_{-l,v}$ we have
\begin{equation}
\int_{U}z_{-l,k} X_{-1,j}\phi dx = -\sum_{r=1}^{d_{l-1}}\int_{U}z_{-l+1,r}\alpha_{r,j}^{l-1,1,k}\phi dx
\end{equation}
for any $\phi \in C^{\infty}_c(U)$, for every $l \ge 2$, $1 \le k \le d_l$ and any $1 \le j \le d_1$. Indeed, in (\ref{eqweakcof}) , one can choose $\eta = \sigma_{-l,k}$ and $\beta = \phi \hat{\sigma}_{-1,j}$. In particular every $C^1$ contact field is a weak contact field and, conversely, every weak contact field of class $C^1$ is a contact field. 
\end{enumerate}
\end{remark}

\subsection{Regularity of weak contact fields}
Here we prove the following regularity result.
\begin{theorem}\label{regularityWeakcof}
Let $G$ be a $C^{\infty}$-rigid Carnot group. Let $Z \in \Gamma(U)$ be a weak contact field, then $Z$ can be redefined on a set of measure zero so that it becomes smooth. It follows that $Z$ is a contact field in the sense of Definition \ref{coF}. If $Z$ is already continuous, then $Z$ is smooth.
\end{theorem}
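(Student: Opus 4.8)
The plan is to prove smoothness by mollification, exploiting that Definition \ref{weakCofield} was written as a left-invariant integral equation. Given a weak contact field $Z$, I would smooth it via Definition \ref{svfields} to obtain a family $Z^\epsilon$ of \emph{smooth} vector fields, show that each $Z^\epsilon$ is again a weak contact field (hence a genuine contact field), and then use the finite dimensionality of the space of contact fields to bound the $Z^\epsilon$ uniformly and pass to the limit $\epsilon \to 0$.

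\textbf{Step 1 (smoothing preserves the weak equation).} By Remark \ref{remarkWeakcof} it suffices to test (\ref{eqweakcof}) against left-invariant vertical $1$-forms $\eta$ and compactly supported weight-$(-\nu+1)$ forms $\beta$. For such $\eta$ both $\eta$ and $d\eta$ are left invariant, so Proposition \ref{prop sforms}(3) applies: with $\alpha=d\eta$ ($k=1$), and with $\alpha=\eta$ ($k=0$, the companion $(n-k)$-form being $d\beta$), one gets
\[
\int_U i_{Z^\epsilon}(d\eta)\wedge\beta - \int_U i_{Z^\epsilon}(\eta)\wedge d\beta = \int_U i_Z(d\eta)\wedge \beta^\epsilon - \int_U i_Z(\eta)\wedge (d\beta)^\epsilon .
\]
By Proposition \ref{prop sforms}(4), $(d\beta)^\epsilon = d(\beta^\epsilon)$. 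Moreover, from the coordinate description of the smoothing in the proof of Proposition \ref{prop sforms} one has $\beta^\epsilon = \sum_i (\rho_\epsilon * \beta_i)\hat\sigma_{-1,i}$, so $\beta^\epsilon$ is smooth, compactly supported for $\epsilon$ small, and still of weight $-\nu+1$; hence it is an admissible test form. Therefore the right-hand side vanishes because $Z$ is a weak contact field, and so $Z^\epsilon$ satisfies (\ref{eqweakcof}) on interior subdomains. Being smooth (Proposition \ref{prop sforms}(1)), $Z^\epsilon$ is then a genuine contact field by Remark \ref{remarkWeakcof}(2).

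\textbf{Step 2 (a priori estimates and the limit).} Since $G$ is $C^\infty$-rigid it is $C^2$-rigid, so by Remark \ref{cofFinite} the space $\mathcal{C}(B)$ of contact fields on any small ball $B$ with $\overline B \subset U$ is finite dimensional. On $\mathcal{C}(B)$ the map $\|\cdot\|_{L^1(B)}$ is a genuine norm (a contact field vanishing a.e.\ is identically zero), and by finite dimensionality it is equivalent to $\|\cdot\|_{C^k(\overline{B'})}$ for every $B'\Subset B$ and every $k$. Applying this to $Z^\epsilon \in \mathcal{C}(B)$ (valid for $\epsilon$ small, by Step 1) and using $Z^\epsilon \to Z$ in $L^1_{loc}$ (Proposition \ref{prop sforms}(5)), which bounds $\sup_\epsilon \|Z^\epsilon\|_{L^1(B)}$, I obtain uniform bounds $\|Z^\epsilon\|_{C^k(\overline{B'})}\le C_k$. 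By Arzel\`a--Ascoli and a diagonal extraction, $Z^{\epsilon_j}\to W$ in $C^\infty_{loc}$ for some smooth $W$, and $W$ is a contact field because (\ref{coCof}) passes to the $C^1$ limit. Since also $Z^\epsilon \to Z$ in $L^1_{loc}$, we have $W=Z$ a.e.; redefining $Z$ to equal $W$ (the local representatives agree a.e.\ on overlaps) makes it a smooth contact field in the sense of Definition \ref{coF}. If $Z$ is already continuous, the convergence in Proposition \ref{prop sforms}(6) is locally uniform, which forces $Z=W$ everywhere, so no redefinition is needed.

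I expect \textbf{Step 1} to be the main obstacle: it is precisely the point for which the contact field equation was recast in the left-invariant integral form, and it hinges on the compatibility of the smoothing with interior multiplication by left-invariant forms and with the exterior derivative (Proposition \ref{prop sforms}(3)--(4)), together with the verification that mollification preserves admissibility of the test form $\beta$ (smoothness, compact support, and weight $-\nu+1$). Once this is in place, the norm equivalence coming from finite dimensionality and the Arzel\`a--Ascoli passage to the limit are comparatively routine.
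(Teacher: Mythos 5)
Your proposal is correct and follows essentially the same route as the paper: smooth $Z$ via Definition \ref{svfields}, use Proposition \ref{prop sforms}(3)--(4) to show $Z^{\epsilon}$ remains a weak (hence, being smooth, genuine) contact field on interior subdomains, then exploit finite dimensionality of the space of contact fields to upgrade $L^1$ convergence and pass to a smooth limit agreeing with $Z$ a.e. The only cosmetic difference is that the paper deduces $Z^{\epsilon}$ is Cauchy in every $C^j$ norm directly from the norm equivalence (rather than your uniform bounds plus Arzel\`a--Ascoli), which is an immaterial variation.
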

\begin{proof}
Let $Z^{\epsilon}$ be the smoothed version of $Z$ as introduced in Definition \ref{svfields}. We will show below that for any $\epsilon < \tilde{\epsilon}$, $Z^{\epsilon}$ is a weak contact field on $U_{\tilde{\epsilon}} := \{ x \in U : d_{cc}(x, U^c) > 2\tilde{\epsilon} \}$. If we assume this to be true, the smoothness of $Z^{\epsilon}$ together with Remark \ref{remarkWeakcof} imply that $Z^{\epsilon}$ are contact fields on $U_{\tilde{\epsilon}}$. As pointed out in Remark \ref{cofFinite} the space of contact fields on $U_{\tilde{\epsilon}}$ is finite dimensional by the rigidity of the group, it follows that any norm on this space is comparable. Since $Z^{\epsilon} \to Z$ on $L^1(U_{\tilde{\epsilon}})$ it follows that $Z^{\epsilon}$ is a Cauchy sequence in $C^{j}(U_{\tilde{\epsilon}})$ for any $j \in \mathbf{N}$. Since a subsequence $Z^{\epsilon_j}$ converges to $Z$ a.e. it follows that we can redefine $Z$ on a set of measure zero on $U_{\tilde{\epsilon}}$ in a way that it becomes smooth. Applying this fact along a sequence $\tilde{\epsilon}_{k} \to 0$ we get the result. Clearly if $Z$ is already continuous we do not need to redefine it, because two continuous functions coinciding a.e. coincide everywhere.
\\
To prove that $Z^{\epsilon}$ is a weak contact field on $U_{\tilde{\epsilon}} := \{ x \in U : d_{cc}(x, U^c) > 2\tilde{\epsilon} \}$ pick $\eta$ and $\beta \in \Omega^{n-1}(U_{\tilde{\epsilon}})$ as in Definition \ref{weakCofield}. Then observing that $d\eta$ is also left invariant, we can use Proposition \ref{prop sforms} (3) and the definition of a weak contact field to get
\begin{equation}
\begin{split}
\int_{U_{\tilde{\epsilon}}} i_{Z^{\epsilon}}(d\eta)\wedge \beta &= \int_{U_{\tilde{\epsilon}}}i_Z(d\eta)\wedge \beta^{\epsilon}
\\ &= \int_{U_{\tilde{\epsilon}}} i_{Z}\eta \wedge (d\beta)^{\epsilon}
\\ &= \int_{U_{\tilde{\epsilon}}} i_{Z^{\epsilon}}\eta\wedge d\beta
\end{split}
\end{equation}
which is exactly the definition of weak contact field.
\end{proof}

\section{$C^1$-rigidity}\label{c1rig}

In this section we will prove the main result of the present paper, namely, that in any $C^{\infty }$-rigid Carnot group $C^1$-contact maps are smooth. Firstly we will show that on any Carnot group, every $C^1$-contact diffeomorphism $f: U \to V$ between two open subsets maps, via the push-forward, contact fields to weak contact fields. Then we will use Theorem  \ref{regularityWeakcof} to infer that on rigid Carnot groups $f_{*}$ maps contact fields to contact fields, and from this we will easily get the smoothness of $f$.

\subsection{The push-forward by a $C^1$-contact map in general Carnot groups}
We start by proving the following result. 
\begin{theorem}\label{pushfoc1}
Let $f: U \to V$ be a $C^1$-contact diffeomorphism between two open subsets of a given Carnot group $G$. The push-forward by $f$ maps contact fields to weak contact fields.
\end{theorem}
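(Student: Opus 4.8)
The plan is to show that if $Z$ is a contact field on $U$ and $W := f_*Z$ is its pushforward to $V$, then $W$ satisfies the weak contact field equation \eqref{eqweakcof}. By Remark \ref{remarkWeakcof}(1) it suffices to verify this for left-invariant vertical $1$-forms $\eta$ and compactly supported $\beta \in \Omega^{n-1}(V)$ of weight $-\nu+1$. The fundamental difficulty is regularity: since $f$ is only $C^1$, the field $W$ has merely continuous (indeed, $C^0$) coefficients, so $dW$ and Lie derivatives of $W$ do not exist classically. The whole point of Definition \ref{weakCofield} is that it avoids differentiating $W$; instead the derivatives fall on the smooth test objects $\eta$ and $\beta$. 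So I must transfer the contact field identity for $Z$ on $U$ — where $Z$ \emph{is} $C^1$ and genuinely a contact field, hence a weak contact field by Remark \ref{remarkWeakcof}(2) — across the change of variables induced by $f$.

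The key computation is a pullback/change-of-variables argument. First I would like to write the target integral $\int_V i_W(d\eta)\wedge\beta - \int_V i_W(\eta)\wedge d\beta$ and pull everything back by $f$ to $U$. The crucial naturality facts are that interior multiplication commutes with pushforward/pullback in the form $f^*(i_{f_*Z}\omega) = i_Z(f^*\omega)$, and that the exterior derivative commutes with pullback, $f^*d = d f^*$. Since $f$ is only $C^1$, this last identity is where I must be careful: $f^*\eta$ and $f^*\beta$ are only continuous forms, so I cannot naively assert $d f^*\eta = f^* d\eta$ in the classical sense. The right move is to interpret these exterior derivatives \emph{distributionally} and check that $f^*$ of a smooth form has a distributional exterior differential equal to $f^*$ of the differential — essentially because the change-of-variables formula for integrals of forms is valid for $C^1$ diffeomorphisms, so integration by parts (Stokes) transfers across $f$. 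Granting this, the target expression becomes $\int_U i_Z(f^*d\eta)\wedge f^*\beta - \int_U i_Z(f^*\eta)\wedge d(f^*\beta)$, modulo the determinant/Jacobian bookkeeping inherent in pulling back top-degree objects.

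At this stage I would exploit the contact hypothesis on $f$. Because $f$ is a contact map, $d_xf(\mathcal{H}_x)\subset\mathcal{H}_{f(x)}$, so the pullback $f^*\eta$ of a vertical $1$-form $\eta$ is again vertical: a vertical form annihilates the horizontal bundle, and contactness guarantees this property survives pullback. Thus $f^*\eta$ is a (continuous) vertical $1$-form on $U$. The remaining issue is that the weak contact field identity for $Z$ on $U$ is stated for \emph{smooth} vertical $\eta$ and \emph{smooth} $\beta$ of weight $-\nu+1$, whereas $f^*\eta$ and $f^*\beta$ are only continuous and need not be homogeneous of the right weight. I would handle the weight condition by noting that only the top-weight part of $\beta$ contributes against the vertical structure (the pairing $i_Z\eta\wedge d\beta$ and $i_Z d\eta\wedge\beta$ select specific weight components for degree reasons, exactly as in the derivation of \eqref{eqweakcof}), and handle the lack of smoothness by an approximation: smooth $f^*\eta$ and $f^*\beta$ using the operations of Section \ref{smoothing} and pass to the limit using Proposition \ref{prop sforms}(5)–(6) together with the continuity of $Z$'s coefficients and the $L^1_{loc}$ convergence.

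I expect the main obstacle to be precisely this interplay between the low regularity of $f$ and the need for an integration-by-parts (Stokes-type) identity under a $C^1$ change of variables. The cleanest route is probably to avoid pulling back $f$ entirely and instead argue \emph{infinitesimally}: the pushforward flow $\phi_t^W = f\circ\phi_t^Z\circ f^{-1}$ of $W$ is a composition of contact maps, hence each $\phi_t^W$ is a ($C^1$) contact map, so it preserves vertical forms; differentiating the identity $\int_V (\phi_t^W)^*\eta\wedge\beta$-type pairings in $t$ and recognizing the result as $\int_V(i_W d\eta + d\,i_W\eta)\wedge(\cdots)$ reproduces \eqref{eqweakcof} directly, with all $t$-derivatives landing on the smooth $\eta,\beta$ rather than on $W$. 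Making this differentiation-under-the-integral rigorous with only $C^1$ flows — justifying that $\frac{d}{dt}\big|_{t=0}(\phi_t^W)^*\eta$ exists weakly and equals $\mathcal L_W\eta$ in the appropriate distributional pairing — is the technical heart of the argument, and is exactly what the coordinate-free, integral formulation of Definition \ref{weakCofield} is designed to accommodate.
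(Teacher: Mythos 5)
Your main line of attack coincides with the paper's: pull the test data back by $f$, use contactness to see that the pullback of a vertical form is vertical, use that $d$ commutes with $f^{*}$ in the distributional sense for a $C^1$ diffeomorphism, extend the weak identity \eqref{eqweakcof} for $Z$ from smooth to merely continuous test forms via the smoothing operations of Section \ref{smoothing} (the paper isolates exactly this step as a standalone lemma, equation \eqref{testwithcont}), and finish with the naturality identity $i_Z f^{*}\omega = f^{*} i_{f_{*}Z}\omega$ and the change of variables (assuming without loss of generality $\dett Df > 0$). However, there is one genuine gap: your treatment of the weight condition. You suspect that $f^{*}\tilde{\beta}$ ``need not be homogeneous of the right weight'' and propose to discard the other weight components because ``only the top-weight part contributes \dots for degree reasons.'' That substitute argument does not work: the identity \eqref{eqweakcof} for a contact field $Z$ is \emph{false} in general when tested against codegree-$1$ forms of weight other than $-\nu+1$. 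The weight restriction is precisely what makes $\beta = i_X\sigma$ with $X$ \emph{horizontal}, so that \eqref{eqweakcof} encodes only the conditions $[Z,X] \in \mathcal{H}$ for horizontal $X$; testing against forms built on $\hat{\sigma}_{-l,v}$ with $l \ge 2$ imposes additional equations on non-horizontal derivatives of the coefficients of $Z$ which contact fields do not satisfy. Concretely, in the Heisenberg group take $\eta = \sigma_{-2}$ and $\beta = \phi\, \sigma_{-1,1}\wedge\sigma_{-1,2}$ (weight $-2$, whereas $-\nu+1 = -3$): then \eqref{eqweakcof} reduces to $\int z_{-2}\, X_{-2}\phi \, dx = 0$ for all $\phi$, i.e. $X_{-2}z_{-2} = 0$ weakly, which fails for instance for the generator of dilations. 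So if $f^{*}\tilde{\beta}$ really had lower-weight components, your argument would break. The correct resolution --- and the paper's --- is that contactness forces $f^{*}\tilde{\beta}$ to have weight exactly $-\nu+1$: a codegree-$1$ form $\beta$ has weight $-\nu+1$ if and only if $\alpha \wedge \beta = 0$ for every vertical $1$-form $\alpha$, and then $\alpha \wedge f^{*}\tilde{\beta} = f^{*}\left( (f^{-1})^{*}\alpha \wedge \tilde{\beta} \right) = 0$, because $(f^{-1})^{*}\alpha$ is vertical ($f^{-1}$ is also a contact map) and $\tilde{\beta}$ annihilates vertical $1$-forms. With this in place your smoothing-and-limiting step is sound; note that the left-translation smoothing preserves both verticality and weight, since $\mathcal{H}$ and the weight decomposition are left-invariant.

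By contrast, the ``cleanest route'' you sketch at the end --- working with the flow $\phi^{W}_t = f \circ \phi^{Z}_t \circ f^{-1}$ of $W = f_{*}Z$ and differentiating pairings in $t$ --- is not cleaner, and the difficulty you defer is fatal rather than merely technical: differentiating $(\phi^{W}_t)^{*}\eta$ at a point requires differentiating $d_x\phi^{W}_t$ in $t$, which by the variational equation involves spatial derivatives of $W$, i.e. exactly the regularity you do not have; the identity $\frac{d}{dt}\phi_t^{*} = \phi_t^{*}\mathcal{L}_{W}$ is simply unavailable for a merely continuous $W$, and moving the derivative onto the smooth forms by a change of variables reproduces the pullback computation you already carried out. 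I would drop that alternative and instead complete the weight argument above; with it, your proof is the paper's proof.
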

To prove this theorem, we use the fact that for smooth contact fields $Z$, the weak contact field equation (\ref{eqweakcof}) holds for a larger class of forms $\eta$ and $\beta$. This is the content of the following lemma.
\begin{lemma}
Let $Z$ be a contact field on $U$. Then
\begin{equation}\label{testwithcont}
\int_{U} i_{Z}(d\eta) \wedge \beta - \int_{U} i_Z(\eta) \wedge d\beta = 0
\end{equation}
for any continuous $\eta$ vertical $1$-form and any continuous compactly supported form $\beta$ of codegree $1$ and of weight $-\nu + 1$, provided $d\eta$ and $d\beta$ exist as continuous forms in the sense of distributions.
\end{lemma}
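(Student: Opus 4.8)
The plan is to prove the identity by smoothing the test forms, reducing to the case of smooth $\eta$ and $\beta$ which is already covered, and then passing to the limit. Concretely, given continuous forms $\eta$ and $\beta$ as in the statement, I would introduce their smoothings $\eta^{\epsilon}$ and $\beta^{\epsilon}$ in the sense of Definition \ref{sforms}. By Proposition \ref{prop sforms}(1) these are smooth, so once I check that $\eta^{\epsilon}$ is again vertical and that $\beta^{\epsilon}$ is again a compactly supported codegree $1$ form of weight $-\nu + 1$, the smooth weak contact field equation (\ref{eqweakcof}), which holds for $Z$ because $Z$ is a $C^1$ contact field and hence a weak contact field by Remark \ref{remarkWeakcof}(2), gives
\begin{equation}
\int_{U} i_{Z}(d\eta^{\epsilon}) \wedge \beta^{\epsilon} - \int_{U} i_Z(\eta^{\epsilon}) \wedge d\beta^{\epsilon} = 0
\end{equation}
for every small $\epsilon > 0$. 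It then remains to let $\epsilon \to 0$.

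First I would verify that smoothing preserves the relevant structure, for elementary reasons. Since the horizontal bundle $\mathcal{H}$ is left invariant, $d(\ell_y)_x$ maps $\mathcal{H}_x$ into $\mathcal{H}_{yx}$, so each pullback $\ell_y^{*}\eta$ is vertical whenever $\eta$ is; verticality being a pointwise linear condition, the average $\eta^{\epsilon}$ is vertical as well. For $\beta = \sum_{i} \beta_i \hat{\sigma}_{-1,i}$ with continuous compactly supported coefficients $\beta_i$, the left invariance of the forms $\hat{\sigma}_{-1,i}$ gives $\ell_y^{*}\beta = \sum_i (\beta_i \circ \ell_y)\hat{\sigma}_{-1,i}$, whence $\beta^{\epsilon} = \sum_i (\rho_{\epsilon} \ast \beta_i)\hat{\sigma}_{-1,i}$. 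Thus $\beta^{\epsilon}$ has the same weight $-\nu + 1$, and its support lies in an $\epsilon$-neighbourhood of $\text{supp}(\beta)$, hence in a fixed compact set $K \subset U$ for all small $\epsilon$. Finally, since by hypothesis $d\eta$ and $d\beta$ exist as continuous, and in particular $\lloc$, forms, Proposition \ref{prop sforms}(4) yields $d\eta^{\epsilon} = (d\eta)^{\epsilon}$ and $d\beta^{\epsilon} = (d\beta)^{\epsilon}$.

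To pass to the limit I would invoke Proposition \ref{prop sforms}(6): as $\eta$, $\beta$, $d\eta$, $d\beta$ are all continuous, the smoothings $\eta^{\epsilon}$, $\beta^{\epsilon}$ and $d\eta^{\epsilon} = (d\eta)^{\epsilon}$, $d\beta^{\epsilon} = (d\beta)^{\epsilon}$ converge uniformly on the compact set $K$ to $\eta$, $\beta$, $d\eta$, $d\beta$ respectively. Since $Z$ is a contact field it is continuous and therefore bounded on $K$, and contraction and wedge product are continuous bilinear operations, so the integrands $i_Z(d\eta^{\epsilon}) \wedge \beta^{\epsilon}$ and $i_Z(\eta^{\epsilon}) \wedge d\beta^{\epsilon}$ converge uniformly on $K$ to $i_Z(d\eta)\wedge \beta$ and $i_Z(\eta) \wedge d\beta$. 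As every integral is supported in the fixed compact set $K$, I may pass the limit inside the integral sign and obtain (\ref{testwithcont}).

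The only genuinely delicate point is the interchange of exterior differentiation and smoothing encoded in Proposition \ref{prop sforms}(4); this is exactly where the hypothesis that $d\eta$ and $d\beta$ exist as continuous, hence $\lloc$, forms is used. Without it one could not identify $d\eta^{\epsilon}$ and $d\beta^{\epsilon}$ with the smoothings of the distributional differentials, and the uniform convergence of the terms involving $d\eta^{\epsilon}$ and $d\beta^{\epsilon}$ would break down. Everything else is a routine approximation argument.
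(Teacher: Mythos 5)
Your proof is correct and takes essentially the same approach as the paper: both arguments rest on the fact that a $C^1$ contact field is a weak contact field (Remark \ref{remarkWeakcof}), on the smoothing machinery of Proposition \ref{prop sforms} (parts (1), (4) and (6)), and on passing to the limit using uniform convergence on a fixed compact set containing the supports. The only difference is organizational---the paper smooths $\eta$ first (keeping $\beta$ smooth) and then smooths $\beta$ in a second step, whereas you smooth both simultaneously and, helpfully, spell out the verifications (verticality of $\eta^{\epsilon}$, weight and support of $\beta^{\epsilon}$) that the paper leaves implicit.
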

\begin{proof}
First assume that $\beta$ is smooth. Then if $\eta^{\epsilon}$ is the smoothed version of $\eta$
\begin{equation}\label{prw1}
\int_{U} i_{Z}(d\eta^{\epsilon}) \wedge \beta - \int_{U} i_Z(\eta^{\epsilon}) \wedge d\beta = 0.
\end{equation}
By Proposition \ref{prop sforms} (4) and (6) $\eta^{\epsilon} \to \eta$ and $d\eta^{\epsilon} =(d\eta)^{\epsilon} \to d\eta$ uniformly on compact subsets of $U$. One easily sees that this implies that $i_Z\eta^{\epsilon} \to i_Z\eta$ and $i_Zd\eta^{\epsilon} \to i_Zd\eta$ uniformly on compact subsets of $U$. Since $\beta$ has compact support we can pass to the limit in (\ref{prw1}) to obtain
\begin{equation}
\int_{U} i_{Z}(d\eta) \wedge \beta - \int_{U} i_Z(\eta) \wedge d\beta = 0.
\end{equation}
Now drop the smoothness assumption on $\beta$. The smoothed version $\beta^{\epsilon}$ is still compactly supported (and we can find a compact subset of $K \subset U$ such that $\text{spt}(\beta^{\epsilon})\subset K$ for all $\epsilon$ small enough) thus (\ref{testwithcont}) holds with $\beta^{\epsilon}$. Once again we may pass to the limit and this yields the claim.
\end{proof}

\begin{proof}[Proof of Theorem \ref{pushfoc1}]
Without loss of generality we may assume that $\dett Df > 0$ on $U$. Let $\tilde{\eta}$ be any left-invariant vertical $1$-form and $\tilde{\beta}$ any codegree $1$, smooth, compactly supported form of weight $-\nu + 1$ on $V$. Define forms $\eta = f^{*}\tilde{\eta}$ and $\beta = f^{*}\tilde{\beta}$. By the contact condition on $f$ the form $\eta$ is still vertical. The contact condition also implies that $\beta$ is weight $-\nu + 1$. Indeed, if $\beta$ is any form of codegree $1$, then $\beta$ has weight $-\nu+1$ if and only if $\alpha \wedge \beta = 0$ for every vertical 1-form $\alpha$. Thus if $\alpha$ is such a form we can evaluate
\begin{equation}
\alpha \wedge \beta = f^*((f^{-1})^*\alpha \wedge \tilde{\beta}) = 0
\end{equation}
where in the last equality we used that $\tilde{\beta}$ is of codegree $1$ of weight $-\nu + 1$ and we used the contact condition to infer that $(f^{-1})^*\alpha$ is still vertical.
Observe also that since $f$ is $C^1$ the pull-back commutes with $d$ in the sense of distributions, this amounts to say that $d\eta = f^*d\tilde{\eta}$ and $d\beta = f^*d\tilde{\beta}$. By the regularity assumption on $f$ these forms are continuous. We can thus insert them into (\ref{testwithcont}) to get
\begin{equation}\label{prw2}
\int_{U} i_{Z}(f^{*}d\tilde{\eta}) \wedge f^{*}\tilde{\beta} - \int_{U} i_Z(f^{*}\tilde{\eta}) \wedge f^{*}d\tilde{\beta} = 0.
\end{equation}
It is easily checked that $i_Zf^{*}d\tilde{\eta} = f^{*}i_{f_*Z}d\tilde{\eta}$, and $i_Zf^{*}\tilde{\eta} = f^{*}i_{f_*Z}\tilde{\eta}$. Thus (\ref{prw2}) reads
\begin{equation}\label{prw3}
\begin{split}
0 &= \int_{U} f^{*}\left(i_{f_*Z}d\tilde{\eta} \wedge \tilde{\beta}\right) - \int_{U} f^{*}\left(i_{f_*Z}\tilde{\eta} \wedge d\tilde{\beta}\right)
\\ & = \int_{V} i_{f_*Z}d\tilde{\eta} \wedge \tilde{\beta} - \int_{V} i_{f_*Z}\tilde{\eta} \wedge d\tilde{\beta}
\end{split}
\end{equation}
where the last equality follows by the fact that $f$ is a $C^1$-diffeomorphism with a positive Jacobian. Equation (\ref{prw3}) is the definition of weak contact field for $f_*Z$.
\end{proof}
\subsection{Smoothness of $C^1$-contact maps}
\begin{proof}[Proof of Theorem \ref{c1rigTheo}]
Let $f: U \to V$ a $C^1$-contact map. Let $p \in U$, upon restricting both $U$ and $V$ we can assume that $f$ is a $C^1$-diffeomorphism. Let $X_1, . . . , X_n$ be contact fields defined near $p$ such that $X_{1,p}, . . . , X_{n,p}$ are a basis for $T_pG$ (one may take, for example, a basis of right invariant fields, see Remark \ref{contactFieldsFrame}). Define, for a given smooth vector field $Z$ near $p$, $\phi^t_{Z}$ to be the locally defined flow of $Z$. Since by Theorem \ref{pushfoc1} and Theorem \ref{weakCofield} the fields $f_*X_i$ are smooth contact fields, we get that for $\Vert t \Vert$ small
\begin{equation}
\varphi: (t_1, . . . , t_n) \to \phi_{X_1}^{t_1} \circ . . .  \circ \phi_{X_n}^{t_n}(p)
\end{equation}
is a chart near $p$ and that
\begin{equation}
\psi: (t_1, . . . , t_n) \to \phi_{f_*(X_1)}^{t_1} \circ . . .  \circ \phi_{f_*(X_n)}^{t_n}(f(p))
\end{equation}
is a chart near $f(p)$. Since $\phi_{f_*X_i}^{t_i} \circ f = f \circ \phi_{X_i}^{t_i}$, in those coordinates $f$ is the identity, thus $f$ is smooth around $p$.
\end{proof}
\nocite{*}
\bibliography{biblio_smoothC1}{}
\bibliographystyle{plain}
\end{document}